\DeclareSymbolFontAlphabet{\mathbb}{AMSb}
\DeclareSymbolFontAlphabet{\mathbbl}{bbold}
\DeclareMathOperator{\map}{map}
\DeclareMathOperator{\K}{K}
\DeclareMathOperator{\THH}{THH}
\DeclareMathOperator{\TC}{TC}
\DeclareMathOperator{\TR}{TR}
\DeclareMathOperator{\LMod}{LMod}
\DeclareMathOperator{\Perf}{Perf}
\DeclareMathOperator{\Proj}{Proj}
\newcommand{\Sp}{\mathrm{Sp}}
\newcommand{\Alg}{\mathrm{Alg}}
\newcommand{\Mod}{\mathrm{Mod}}
\newcommand{\CycSp}{\mathrm{CycSp}}
\renewcommand{\S}{\mathbf{S}}
\newcommand{\Z}{\mathbb{Z}}
\newcommand{\F}{\mathbb{F}}
\newcommand{\E}{\mathbb{E}}
\newcommand{\cat}[1]{\mathscr{#1}}
\newtheorem{theorem}{Theorem}[section]
\newtheorem{proposition}[theorem]{Proposition}
\newtheorem{corollary}[theorem]{Corollary}
\newenvironment{theorem*}[1]{\theoremvar}{\endtheoremvar}
\newenvironment{corollary*}[1]{\corollaryvar}{\endcorollaryvar}
\theoremstyle{definition}
\newtheorem{definition}[theorem]{Definition}
\newtheorem{remark}[theorem]{Remark}
\newtheorem{example}[theorem]{Example}
\title{A chromatic vanishing result for TR}
\author{Liam Keenan}
\address{Department of Mathematics, University of Minnesota, USA}
\email{keena169@umn.edu}
\author{Jonas McCandless}
\address{Max Planck Institute for Mathematics, Bonn, Germany}
\email{mccandless@mpim-bonn.mpg.de}
\begin{document}

\begin{abstract}
In this note, we establish a vanishing result for telescopically localized TR. More precisely, we prove that $T(k)$-local TR vanishes on connective $L_n^{p,f}$-acyclic $\E_1$-rings for every $1 \leq k \leq n$ and deduce consequences for connective Morava K-theory and the Thom spectra $y(n)$. The proof relies on the relationship between TR and the spectrum of curves on K-theory together with fact that algebraic K-theory preserves infinite products of additive $\infty$-categories which was recently established by C{\'o}rdova Fedeli. 
\end{abstract}

\maketitle

\section{Introduction} \label{section:introduction}
In this note, we study the telescopic localizations of TR inspired by the work of Land--Mathew--Meier--Tamme~\cite{LMT20} and Mathew~\cite{Mat20}. Our starting point is the following result which follows from the main result of~\cite{LMT20}: If $R$ is an $\E_1$-ring with $L_n^{p,f} R \simeq 0$, then
\[
	L_{T(k)}\K(R) \simeq 0
\]
for every $1 \leq k \leq n$. For instance, if $R = \Z/p^n$ for some integer $n \geq 1$, then $L_{T(1)}\K(\Z/p^n) \simeq 0$. We consider this result as an extension of Quillen's fundamental calculation that $\K(\F_p)_p^{\wedge} \simeq H\Z_p$ which in particular yields that $L_{T(1)}\K(\F_p) \simeq 0$. This particular consequence was also obtained by Bhatt--Clausen--Mathew~\cite{BCM20} by means of a calculation in prismatic cohomology. Additionally, the vanishing result above for $T(k)$-local K-theory can be applied to the Morava K-theories $K(n)$ and to the Thom spectra $y(n)$ considered by Mahowald--Ravenel--Shick in~\cite{MRS01}.

\subsection{Results}
We will be interested in similar vanishing results for $T(k)$-local $\TR$\footnote{Note that $L_{T(k)}\TR(R) \simeq L_{T(k)}\TR(R, p)$, where $\TR(R, p)$ denotes the $p$-typical version of $\TR$. Indeed, the canonical map $\TR(R) \to \TR(R, p)$ is a $p$-adic equivalence and $T(n)$-localization is insensitive to $p$-completion. Therefore, we will not distinguish between $p$-typical TR and integral TR in this note.}. The invariant $\TR$ plays an instrumental role in the classical construction of topological cyclic homology in~\cite{BHM93,HM97,BM16}, where $\TC$ is obtained as the fixedpoints of a Frobenius operator on $\TR$. In~\textsection\ref{section:chromatic_vanishing_results}, we briefly review the construction of $\TR$ following~\cite{McC21} which produces $\TR$ together with its Frobenius operator entirely in the Borel--equivariant formalism of Nikolaus--Scholze~\cite{NS18}. Even though $\TR$ does not feature prominently in the construction of $\TC$ given in~\cite{NS18}, $\TR$ remains an important invariant by virtue of its close relationship to the Witt vectors and the de Rham--Witt complex~\cite{Hes96,HM97,HM03,HM04}. In~\cite{Mat20}, Mathew proves that $T(1)$-local $\TR$ is truncating on connective $H\Z$-algebras which means that if $R$ is a connective $H\Z$-algebra, then the canonical map of spectra
\[
	L_{T(1)} \TR(R) \to L_{T(1)} \TR(\pi_0 R)
\]
is an equivalence. This property was verified for $T(1)$-local K-theory and $T(1)$-local $\TC$ in~\cite{BCM20,LMT20}. Our main result is a version of this at higher chromatic heights:

\begin{theorem*}{A} \label{theorem:A}
Let $n \geq 1$. If $R$ is a connective $\E_1$-ring such that $L_n^{p,f} R \simeq 0$, then
\[
	L_{T(k)} \TR(R) \simeq 0
\] 
for every $1 \leq k \leq n$.
\end{theorem*}

We remark that Theorem~\ref{theorem:A} is a consequence of the work of~\cite{LMT20} in the case where $R$ admits a more refined multiplicative structure; If $R$ admits an $\E_m$-ring structure for $m \geq 2$, then the refined cyclotomic trace $\K(R) \to \TR(R)$ is a map of $\E_1$-rings. Consequently, the spectrum $L_{T(k)}\TR(R)$ admits the structure of a $L_{T(k)}\K(R)$-module and $L_{T(k)}\K(R) \simeq 0$ by~\cite[Theorem 3.8]{LMT20}. A similar sort of reasoning has recently been employed with great success to study redshift phenomena for algebraic K-theory in~\cite{BSY22,CMNN20b,HW20,Yuan21}. We deduce the following results from Theorem~\ref{theorem:A}: 

\begin{corollary*}{B} \label{corollary:B}
Let $n \geq 1$. Then $L_{T(k)}\TR(\Z/p^n) \simeq 0$ for every $k \geq 1$. 
\end{corollary*}

We stress that Corollary~\ref{corollary:B} is a consequence of the work of~\cite{BCM20,LMT20} by the reasoning above. For $n = 1$, Corollary~\ref{corollary:B} can also be deduced from the work of Mathew~\cite{Mat20}. Since $T(1)$-local $\TR$ is truncating on connective $H\Z$-algebra it is in particular nilinvariant by~\cite{LT19}, so
\[
	L_{T(1)}\TR(\Z/p^n) \simeq L_{T(1)}\TR(\F_p) \simeq 0,
\]
where the final equivalence follows since $\TR(\F_p, p) \simeq H\Z_p$ by Hesselholt--Madsen~\cite{HM97}. As a consquence of Theorem~\ref{theorem:A} we deduce a new chromatic vanishing result for the connective Morava K-theories, which we denote by $k(n)$.
While $k(n)$ admits the structure of an $\E_1$-ring, it does not admit the structure of an $\E_2$-ring so we cannot argue using the refined cyclotomic trace above.

\begin{corollary*}{C} \label{corollary:C}
Let $n \geq 2$. Then $L_{T(k)}\TR(k(n)) \simeq 0$ for every $1 \leq k \leq n-1$. 
\end{corollary*}

Similarly, we obtain a chromatic vanishing result for the Thom spectra $y(n)$ considered in~\cite{MRS01}.

\subsection{Methods}
We end by explaining the strategy of our proof of Theorem~\ref{theorem:A}. They key input is the close relationship between TR and the spectrum of curves on K-theory as studied in~\cite{BS05,Blo77,Hes96,McC21}. For every $\E_1$-ring $R$, the spectrum of curves on K-theory is defined by
\[
	\mathrm{C}(R) = \varprojlim_i \Omega \tilde{\K}(R[t]/t^i),
\]
where $\tilde{\K}(R[t]/t^i)$ denotes the fiber of the map $\K(R[t]/t^i) \to \K(R)$ induced by the augmentation. If we assume that $R$ is connective, then $\TR(R) \simeq \mathrm{C}(R)$ by~\cite[Corollary 4.2.5]{McC21}. This result was preceded by Hesselholt~\cite{Hes96} and Betley--Schlichtkrull~\cite{BS05} who established the result for associative rings after profinite completion. Combining the theorem of the weighted heart (cf.~\cite{Fon18,Hel20,HS21}) with the recent result of C{\'o}rdova Fedeli~\cite[Corollary 2.11.1]{Cor23} which asserts that algebraic K-theory preserves arbitrary products of additive $\infty$-categories, we reduce to proving that
\[
	L_{T(k)} \K^{\oplus}\big( \prod_{i \geq 1} \Proj^{\omega}_{R[t]/t^i} \big) \simeq 0
\]
provided that $L_n^{p,f}R \simeq 0$, where $\Proj^{\omega}_{R[t]/t^i}$ denotes the additive $\infty$-category of finitely generated projective $R[t]/t^i$-modules and $\K^{\oplus}$ denotes additive algebraic K-theory. This claim can be verified explicitly by using~\cite[Proposition 3.6]{LMT20}. 

\subsubsection*{Acknowledgements}
The authors are grateful to Akhil Mathew for discussions about and interest in this project. 
The first author would also like to thank Tyler Lawson for a number of helpful conversations.
The second author was funded by the Deutsche Forschungsgemeinschaft (DFG, German Research Foundation) under Germany's Excellence Strategy EXC 2044 390685587, Mathematics Münster: Dynamics–Geometry–Structure and the Max Planck Institute for Mathematics in Bonn while working on this project.

\section{Preliminaries on weight structures and K-theory}
The main technical apparatus for deducing our chromatic vanishing result for TR is the notion of a weight structure on a stable $\infty$-category in conjunction with the closely related theorem of the weighted heart (cf.~\cite{Fon18,HS21}). This will help us reduce to studying additive algebraic K-theory of additive $\infty$-categories. 


\begin{definition}
A weight structure on a stable $\infty$-category $\cat{C}$ consists of a pair of full subcategories $\cat{C}_{[0, \infty]}$ and $\cat{C}_{[-\infty, 0]}$ of $\cat{C}$ such that the following conditions are satisfied:
\begin{enumerate}[leftmargin=2em, topsep=5pt, itemsep=5pt]
	\item The full subcategories $\cat{C}_{[0, \infty]}$ and $\cat{C}_{[-\infty, 0]}$ are closed under retracts in $\cat{C}$. 
	\item For $X \in \cat{C}_{[-\infty, 0]}$ and $Y \in \cat{C}_{[0, \infty]}$, the mapping spectrum $\map_{\cat{C}}(X, Y)$ is connective.
	\item For every $X \in \cat{C}$, there is a fiber sequence
	\[
		X' \to X \to X''
	\]
	with $X' \in \cat{C}_{[-\infty, 0]}$ and $X''[-1] \in \cat{C}_{[0, \infty]}$. 
\end{enumerate}
The heart of the weight structure is the subcategory $\cat{C}^{\mathrm{ht}} = \cat{C}_{[0,0]}$, where $\cat{C}_{[a, b]} = \cat{C}_{[a, \infty]} \cap \cat{C}_{[-\infty, b]}$. The weight structure is said to be exhaustive if every object is bounded, in the sense that
\[
	\cat{C} = \bigcup_{n \in \Z} \cat{C}_{[-n, n]}.
\]
A weighted $\infty$-category is a stable $\infty$-category equipped with a weight structure.
\end{definition}

\begin{remark}
The heart of a weighted $\infty$-category is an additive $\infty$-category (\cite[Lemma 3.1.2]{HS21}).
\end{remark}

We recall the following terminology which will play an important role throughout this note. For every connective $\E_1$-ring $R$, let $\Proj_R^{\omega}$ denote the full subcategory of the $\infty$-category $\LMod_{R}^{\geq 0}$ spanned by those connective left $R$-modules which are finitely generated and projective. Recall that an object of $\Proj_R^{\omega}$ can be written as a retract of a finitely generated free $R$-module (cf.~\cite[Proposition 7.2.2.7]{Lur17}). For any not necessarily connective $\E_1$-ring, let $\Perf_R$ denote the $\infty$-category of perfect $R$-modules defined as the smallest stable subcategory of $\LMod_R$ which contains $R$ and is closed under retracts. The following is our main example of interest:

\begin{example} \label{example:weight_structure_perf}
For a connective $\E_1$-ring $R$, let $\Perf_{R, \geq 0}$ be the full subcategory of $\Perf_{R}$ spanned by those perfect $R$-modules which are connective, and let $\Perf_{R, \leq 0}$ denote the full subcategory of $\Perf_R$ spanned by those perfect $R$-modules $M$ which have projective amplitude $\leq 0$. This means that every $R$-linear map $M \to N$ is nullhomotopic provided that $N$ is $1$-connective. The pair $(\Perf_{R, \geq 0}, \Perf_{R, \leq 0})$ defines an exhaustive weight structure on $\Perf_R$ whose heart is equivalent to the additive $\infty$-category $\Proj_R^{\omega}$ of finitely generated projective $R$-modules (cf. \cite[1.38 \& 1.39]{Hel20}); while the proofs therein are stated for connective $\E_\infty$-rings, the same arguments work in the $\E_1$ case. 
\end{example}

The algebraic K-theory of a weighted $\infty$-category is often determined by the additive algebraic K-theory of its heart by virtue of the theorem of the weighted heart first established by Fontes~\cite{Fon18} but we also refer the reader to~\cite[Corollary 8.1.3, Remark 8.1.4]{HS21}. Let $\cat{A}$ denote an additive $\infty$-category regarded as a symmetric monoidal $\infty$-category with the cocartesian symmetric monoidal structure, so that the core $\cat{A}^{\simeq}$ inherits the structure of an $\E_\infty$-monoid. Recall that the additive algebraic K-theory of $\cat{A}$ is defined by
\[
	\K^{\oplus}(\cat{A}) = (\cat{A}^{\simeq})^{\mathrm{grp}},
\]
where $(\cat{A}^{\simeq})^{\mathrm{grp}}$ denotes the group completion of the $\E_\infty$-monoid $\cat{A}^{\simeq}$. We have the following result which will play an instrumental role below (cf.~\cite[Theorem 5.1]{Fon18} and~\cite[Corollary 8.1.3]{HS21}):

\begin{theorem} \label{theorem:weighted_heart_theorem}
The canonical map of spectra
\[
	\K^{\oplus}(\cat{C}^{\mathrm{ht}}) \to \K(\cat{C})
\]
is an equivalence for every stable $\infty$-category $\cat{C}$ equipped with an exhaustive weight structure.
\end{theorem}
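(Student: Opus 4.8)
The plan is to reduce, using the weight decompositions of axiom (3), to a statement about the heart that can be handled by the additivity theorem in Waldhausen $\K$-theory. I would first pin down the map and its source. Since $\cat{C}^{\mathrm{ht}}$ is additive, every split sequence in it is a cofiber sequence in $\cat{C}$; conversely, a cofiber sequence $X \to Z \to Y$ in $\cat{C}$ with all three terms in $\cat{C}^{\mathrm{ht}}$ automatically splits, being classified by a map in $\pi_{-1}\map_{\cat{C}}(Y, X)$, which vanishes by axiom (2) since $Y \in \cat{C}_{[-\infty, 0]}$ and $X \in \cat{C}_{[0, \infty]}$. Hence the exact structure that $\cat{C}^{\mathrm{ht}}$ inherits from $\cat{C}$ is the split one, whose Waldhausen $\K$-theory is $\K^{\oplus}(\cat{C}^{\mathrm{ht}})$ by definition, and the inclusion $\cat{C}^{\mathrm{ht}} \hookrightarrow \cat{C}$ is an exact functor inducing the map in question.

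Next I would reduce to a weight structure of finite width. Exhaustivity gives $\cat{C} = \colim_{n} \cat{C}_{[-n, n]}$; each $\cat{C}_{[-n,n]}$ is closed under retracts by axiom (1) and, by a standard consequence of the axioms, under extensions in $\cat{C}$, hence is an exact $\infty$-category carrying a weight structure of finite width with the same heart $\cat{C}^{\mathrm{ht}}$, and since $\K$ is finitary on exact $\infty$-categories, $\K(\cat{C}) \simeq \colim_{n} \K(\cat{C}_{[-n,n]})$. So it suffices to prove the analogue for an exact $\infty$-category $\cat{E}$ with a weight structure of finite width, by induction on the width. The width-zero case is trivial, the category then being equivalent to $\cat{C}^{\mathrm{ht}}$ up to a shift. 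For the inductive step, realize $\cat{E}$ as $\cat{E}_{[a,b]}$ with $b > a$, put $\cat{E}' = \cat{E}_{[a, b-1]}$, and note that by the inductive hypothesis $\K^{\oplus}(\cat{C}^{\mathrm{ht}}) \xrightarrow{\sim} \K(\cat{E}')$; it remains to show that the inclusion $\cat{E}' \hookrightarrow \cat{E}$ is a $\K$-equivalence. The relevant structure is that axiom (3), applied to the shifted weight structure, gives for every $X \in \cat{E}$ a cofiber sequence $w_{\leq b-1}X \to X \to P[b]$ with $w_{\leq b-1}X \in \cat{E}'$ and $P \in \cat{C}^{\mathrm{ht}}$, that axiom (2) forces $\map_{\cat{E}}(\cat{E}', \cat{C}^{\mathrm{ht}}[b])$ to consist of $1$-connective spectra, and that the connecting maps $P[b] \to (w_{\leq b-1}X)[1]$ are otherwise unconstrained.

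The inductive step itself I would run through the $S_\bullet$-construction and additivity. Iterating weight decompositions, each object acquires a finite ``weight tower'' whose associated graded lies in shifts of $\cat{C}^{\mathrm{ht}}$; on the level of $S_\bullet$-constructions the top-weight pieces $P[b]$ contribute, degreewise, a copy of the $S_\bullet$-construction of $\cat{C}^{\mathrm{ht}}$, but the $1$-connectivity of the corresponding gluing data makes this extra contribution collapse after group completion, so that $\K(\cat{E})$ is assembled out of $\K(\cat{E}')$ alone and the inclusion $\cat{E}' \hookrightarrow \cat{E}$ is an equivalence; this should be a direct application of the additivity theorem to the evident filtration of $S_\bullet\cat{E}$.

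The main obstacle — and the genuine technical content, carried out in~\cite{Fon18} and~\cite{HS21} — is that the weight decomposition $w_{\leq b-1}X \to X \to P[b]$ is \emph{not functorial} in $X$, being unique only up to non-canonical equivalence, so one cannot literally apply additivity to a ``top-weight truncation functor'' $\cat{E} \to \cat{C}^{\mathrm{ht}}$. There are two standard ways around this: either construct an $\infty$-categorically coherent weight-complex functor from $\cat{C}$ to bounded chain complexes over $\cat{C}^{\mathrm{ht}}$ and compare $\K$-theories along it, using a Gillet--Waldhausen-type identification of the $\K$-theory of the target with $\K^{\oplus}(\cat{C}^{\mathrm{ht}})$; or argue directly with the $S_\bullet$-construction, exploiting that for a fixed object the space of weight decompositions is weakly contractible, so that the non-canonical choices do not affect the induced map of $\K$-theory spectra. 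I would take the latter route and carry it out only for the single inductive step above, which is where all the work lies; everything else is formal.
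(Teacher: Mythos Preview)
The paper does not give its own proof of this theorem: it is stated as a known result and attributed to Fontes~\cite[Theorem 5.1]{Fon18} and Hebestreit--Steimle~\cite[Corollary 8.1.3]{HS21}, with no argument supplied. So there is nothing in the paper to compare your proposal against.

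That said, your outline is broadly in the spirit of the cited proofs. The opening paragraph (cofiber sequences in the heart split, so the inherited exact structure is the split one) is correct and is the standard first step. The reduction to bounded weight via exhaustivity and finitariness of $\K$ is also the right move, though note that the truncated pieces $\cat{C}_{[-n,n]}$ are not stable, so you are implicitly passing to the setting of weight-exact $\infty$-categories; this is fine but should be made explicit. Your inductive step correctly isolates the real issue, namely the non-functoriality of weight decompositions, and the two workarounds you name are exactly the two approaches in the literature: Fontes constructs a coherent weight-complex functor to $\mathrm{Ch}^b(\cat{C}^{\mathrm{ht}})$ and invokes a Gillet--Waldhausen comparison, while Hebestreit--Steimle run a d\'ecalage/additivity argument directly on the hermitian $Q$-construction. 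Your preference for the second route is reasonable, but be aware that the claim ``the space of weight decompositions of a fixed object is contractible'' is not literally true without further qualification; what one actually uses is a cofinality or d\'ecalage statement at the level of simplicial objects, not pointwise contractibility. If you intend to carry this out, that is the place where the genuine work lies and where a hand-wave will not suffice.
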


\section{Chromatic vanishing results} \label{section:chromatic_vanishing_results}
The main goal of this section is to prove Theorem~\ref{theorem:A} from~\textsection\ref{section:introduction} and discuss various consequences. As explained, our proof of this result relies on the close relationship between $\TR$ and the spectrum of curves in K-theory (cf.~\cite{BS05,Hes96,McC21}). We will regard $\TR$ as a functor $\TR : \Alg_{\E_1}^{\mathrm{cn}} \to \Sp$ given by
\[
	\TR(R) \simeq \map_{\CycSp}(\widetilde{\THH}(\S[t]), \THH(R))
\]
following~\cite{McC21} and this agrees with the classical construction of $\TR$ by~\cite[Theorem 3.3.12]{McC21}. By virtue of our assumption that $R$ is connective, there is an equivalence of spectra
\[
	\TR(R) \simeq \varprojlim \Omega \tilde{\K}(R[t]/t^i),
\]
where $\tilde{\K}(R[t]/t^i)$ denotes the fiber of the map $\K(R[t]/t^i) \to \K(R)$ induced by the augmentation. In this generality, the result was obtained by the second author in~\cite{McC21} preceded by Hesselholt~\cite{Hes96} and Betley--Schlichtkrull~\cite{BS05} who proved the result for associative rings after profinite completion. With this equivalence at our disposal, we prove the following result: 

\begin{theorem} \label{theorem:vanishing_TR}
Let $n \geq 1$. If $R$ is a connective $\E_1$-ring such that $L_n^{p,f}R \simeq 0$, then
\[
	L_{T(k)} \TR(R) \simeq 0
\]
for every $1 \leq k \leq n$. 
\end{theorem}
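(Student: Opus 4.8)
The plan is to combine the three ingredients advertised in the introduction: the identification $\TR(R) \simeq \varprojlim_i \Omega\tilde{\K}(R[t]/t^i)$, the theorem of the weighted heart (Theorem~\ref{theorem:weighted_heart_theorem}), and C{\'o}rdova Fedeli's result that $\K$ preserves arbitrary products of additive $\infty$-categories. First I would reduce the inverse limit to a product. Since $T(k)$-localization does not commute with infinite limits in general, I would instead observe that the inverse system $\{\Omega\tilde{\K}(R[t]/t^i)\}$ has surjective transition maps on homotopy groups (the augmentations $R[t]/t^{i+1} \to R[t]/t^i$ split off the previously-identified summands), or more robustly, that $\varprojlim$ over $\N$ sits in a fiber sequence with $\prod$ and $\prod$ via the $(\mathrm{id} - \mathrm{shift})$ map; hence it suffices to prove that $L_{T(k)}$ annihilates $\prod_{i\geq 1}\tilde{\K}(R[t]/t^i)$, and for that in turn that $L_{T(k)}$ annihilates $\prod_{i\geq 1}\K(R[t]/t^i)$ and $\prod_{i\geq 1}\K(R)$, using that the fiber $\tilde\K$ is a summand of each factor compatibly.

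Next I would bring in the weight structure. By Example~\ref{example:weight_structure_perf}, each $\Perf_{R[t]/t^i}$ carries an exhaustive weight structure with heart $\Proj^\omega_{R[t]/t^i}$, so Theorem~\ref{theorem:weighted_heart_theorem} gives $\K(R[t]/t^i) \simeq \K(\Perf_{R[t]/t^i}) \simeq \K^{\oplus}(\Proj^\omega_{R[t]/t^i})$. The key point is to push this identification through the product: the product weight structure on $\prod_i \Perf_{R[t]/t^i}$ is again exhaustive with heart $\prod_i \Proj^\omega_{R[t]/t^i}$, and $\K$ commutes with this product by~\cite[Corollary 2.11.1]{Cor23}, while $\K^\oplus$ commutes with products essentially formally since group completion of $\E_\infty$-monoids is a right adjoint followed by a filtered colimit, or directly because products of additive $\infty$-categories have the expected cores. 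Therefore
\[
	\prod_{i\geq 1}\K(R[t]/t^i) \simeq \K\Big(\prod_{i\geq 1}\Perf_{R[t]/t^i}\Big) \simeq \K^{\oplus}\Big(\prod_{i\geq 1}\Proj^\omega_{R[t]/t^i}\Big),
\]
and the whole problem is reduced to showing $L_{T(k)}\K^{\oplus}\big(\prod_{i\geq 1}\Proj^\omega_{R[t]/t^i}\big) \simeq 0$ whenever $L_n^{p,f}R \simeq 0$ and $1\leq k\leq n$.

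Finally I would verify this vanishing by hand. Any object of $\prod_i \Proj^\omega_{R[t]/t^i}$ is a retract of a "free" object $\prod_i (R[t]/t^i)^{\oplus m_i}$, so the core is built, under filtered colimits and group completion, out of the $\E_\infty$-monoids $\coprod_{m} B\mathrm{GL}_m$ of these rings; concretely $\K^\oplus$ of such a product of module categories is computed from the algebraic $\K$-theory spectra of the rings $R[t]/t^i$ assembled via products and a plus-construction-type group completion, and the essential content is that it is a module-like construction over $\K^\oplus$ of $\prod_i R[t]/t^i$ — itself $\K$ of a ring $L_n^{p,f}$-acyclic because each $R[t]/t^i$ is built from finitely many copies of $R$ and $L_n^{p,f}$-acyclicity is inherited by finite (co)products and is a property of the $\E_1$-ring $\prod_i R[t]/t^i$. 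Then~\cite[Proposition 3.6]{LMT20} (together with~\cite[Theorem 3.8]{LMT20}) gives that $L_{T(k)}\K$ vanishes on this $L_n^{p,f}$-acyclic input for $1\leq k\leq n$. The main obstacle I expect is precisely the bookkeeping in this last step: making rigorous that additive $\K$-theory of an infinite product of projective-module categories is controlled by $\K$-theory of the product ring $\prod_i R[t]/t^i$ (rather than naively by the product $\prod_i \K(R[t]/t^i)$, which would be circular), and checking that $L_n^{p,f}$-acyclicity genuinely passes to the infinite product ring $\prod_i R[t]/t^i$ — this is the one place where an infinite product, rather than a finite one, really enters and where one must invoke the behavior of $L_n^{p,f}$ on products as in~\cite[Proposition 3.6]{LMT20}.
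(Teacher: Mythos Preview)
Your overall strategy is exactly the paper's: reduce the sequential limit to a fiber of products via the $(\id-\mathrm{shift})$ sequence, identify $\prod_i \K(R[t]/t^i)$ with $\K^\oplus\big(\prod_i \Proj^\omega_{R[t]/t^i}\big)$ using the weighted heart theorem and C{\'o}rdova Fedeli, and then kill this with \cite[Proposition~3.6]{LMT20}. Two points in your execution need correction, however.

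First, the product weight structure on $\prod_i \Perf_{R[t]/t^i}$ is \emph{not} exhaustive: a tuple $(M_i)_i$ with $M_i$ of amplitude $[-i,i]$, say, is not bounded. Relatedly, the fact that $\K^\oplus$ commutes with products of additive $\infty$-categories is not a formality---group completion is a \emph{left} adjoint, not a right adjoint---and is precisely the content of \cite[Corollary~2.11.1]{Cor23}. The paper therefore runs the equivalences in the opposite order from yours: apply the weighted heart theorem factorwise to get $\prod_i \K(\Perf_{R[t]/t^i}) \simeq \prod_i \K^\oplus(\Proj^\omega_{R[t]/t^i})$, and then invoke \cite{Cor23} to pull the product inside $\K^\oplus$. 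No weight structure on the product category is needed.

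Second, your final step is more circuitous than necessary and the worry you flag is a symptom of this. There is no need to relate $\K^\oplus\big(\prod_i \Proj^\omega_{R[t]/t^i}\big)$ to $\K$ of the single ring $\prod_i R[t]/t^i$. The input to \cite[Proposition~3.6]{LMT20} is an additive $\infty$-category all of whose endomorphism $\E_1$-rings are $L_n^{p,f}$-acyclic. For $P=(P_i)_i \in \prod_i \Proj^\omega_{R[t]/t^i}$ one has $\mathrm{End}(P) \simeq \prod_i \map_{R[t]/t^i}(P_i,P_i)$, which is a retract of $\prod_i (R[t]/t^i)^{\oplus n_i^2}$ for suitable $n_i$. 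This last spectrum is a left $R$-module (via the unit maps $R\to R[t]/t^i$), hence $L_n^{p,f}$-acyclic because $L_n^{p,f}$ is smashing and $R$ is acyclic. That disposes of the infinite-product issue in one line and is exactly how the paper proceeds.
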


The limit in the definition of the spectrum of curves on K-theory above does not commute with $T(k)$-localization. Instead, the proof of Theorem~\ref{theorem:vanishing_TR} relies on the following result, which is proved by combining the theorem of the weighted heart and a recent result which asserts that additive algebraic K-theory preserves infinite products of additive $\infty$-categories, due to C{\'o}rdova Fedeli~\cite{Cor23}. 

\begin{proposition} \label{proposition:product_Ktheory_vanishes}
Let $R$ be a connective $\E_1$-ring which vanishes after $L_n^{p,f}$-localization. If $\{S_i\}_{i \in I}$ is collection of connective $\E_1$-rings with a map of $\E_1$-rings $R \to S_i$ for every $i \in I$, then 
\[
	L_{T(k)} \big( \prod_{i \in I} \K(S_i) \big) \simeq 0
\]
for every $1 \leq k \leq n$.
\end{proposition}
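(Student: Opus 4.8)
The plan is to reduce the claim to a concrete computation with additive K-theory of a single additive $\infty$-category, namely the product $\prod_{i \in I} \Proj_{S_i}^\omega$, and then to invoke the explicit vanishing criterion of Land--Mathew--Meier--Tamme. First I would use the theorem of the weighted heart (Theorem~\ref{theorem:weighted_heart_theorem}) together with Example~\ref{example:weight_structure_perf}: for each $i$, algebraic K-theory of the connective $\E_1$-ring $S_i$ agrees with $\K(\Perf_{S_i, \geq 0})$, which by the exhaustive weight structure is identified with $\K^\oplus(\Proj_{S_i}^\omega)$. Hence $\prod_{i \in I}\K(S_i) \simeq \prod_{i \in I}\K^\oplus(\Proj_{S_i}^\omega)$.

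Next I would bring in C{\'o}rdova Fedeli's result~\cite[Corollary 2.11.1]{Cor23}, which asserts that additive algebraic K-theory commutes with arbitrary products of additive $\infty$-categories; applied to $\{\Proj_{S_i}^\omega\}_{i\in I}$ this gives $\prod_{i \in I}\K^\oplus(\Proj_{S_i}^\omega) \simeq \K^\oplus\big(\prod_{i \in I}\Proj_{S_i}^\omega\big)$. So it suffices to show $L_{T(k)}\K^\oplus\big(\prod_{i \in I}\Proj_{S_i}^\omega\big) \simeq 0$ for $1 \le k \le n$. The point of passing to the product category is that each $\Proj_{S_i}^\omega$ receives a symmetric monoidal (for the cocartesian structure) functor from $\Proj_R^\omega$ via base change along $R \to S_i$, so the product category is an algebra over $\K^\oplus(\Proj_R^\omega) \simeq \K(R)$ — more precisely, $\K^\oplus(\prod_i \Proj_{S_i}^\omega)$ is a module over $\K(R)$. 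Since $L_n^{p,f}R \simeq 0$, the main result of~\cite{LMT20} (the case $k=1,\dots,n$ of~\cite[Theorem 3.8]{LMT20}, or directly~\cite[Proposition 3.6]{LMT20}) gives $L_{T(k)}\K(R)\simeq 0$, and a module over a $T(k)$-acyclic $\E_1$-ring is itself $T(k)$-acyclic. Alternatively, one can invoke~\cite[Proposition 3.6]{LMT20} directly: that proposition is designed to detect $T(k)$-acyclicity of K-theory (or additive K-theory) of a category in terms of the $L_n^{p,f}$-acyclicity of an underlying ring-like input, and the product category is built functorially out of $R$-algebras.

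The step I expect to be the main obstacle — or at least the one requiring the most care — is making precise the multiplicative/module structure in the last paragraph: one must check that the equivalences from the theorem of the weighted heart and from C{\'o}rdova Fedeli's theorem are compatible with the relevant symmetric monoidal structures, so that $\K^\oplus\big(\prod_i \Proj_{S_i}^\omega\big)$ genuinely acquires a $\K(R)$-module structure through which~\cite{LMT20} applies. This is where one either verifies the hypotheses of~\cite[Proposition 3.6]{LMT20} head-on for the product category, or tracks lax symmetric monoidality of $\K^\oplus$ and of base change $\Proj_R^\omega \to \Proj_{S_i}^\omega$ carefully enough to produce the module structure. Everything else — the weighted heart identification, the product formula for $\K^\oplus$, and the passage from $T(k)$-acyclicity of a ring to that of its modules — is formal given the results already recalled in the excerpt.
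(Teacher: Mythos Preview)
Your reduction to showing $L_{T(k)}\K^{\oplus}\big(\prod_i \Proj_{S_i}^{\omega}\big)\simeq 0$ via the weighted heart theorem and C\'ordova Fedeli's product formula is exactly what the paper does. The divergence is in how you finish.

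Your primary route---producing a $\K(R)$-module structure on $\K^{\oplus}\big(\prod_i \Proj_{S_i}^{\omega}\big)$ and then using $L_{T(k)}\K(R)\simeq 0$---has a genuine gap: $R$ is only assumed to be an $\E_1$-ring, so $\Perf_R$ (and $\Proj_R^{\omega}$) carries no monoidal structure beyond the cocartesian one, and $\K(R)$ is not a ring spectrum in any useful sense. There is therefore nothing for $\K^{\oplus}$ of the product category to be a module over. This is precisely the obstruction the paper flags in the introduction when it notes that the module trick works only once $R$ is $\E_2$ or better. No amount of tracking lax monoidality will manufacture the missing multiplication.

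The paper instead takes your ``alternative'' and carries it out concretely. The input to \cite[Proposition 3.6]{LMT20} is that every endomorphism $\E_1$-ring of the additive $\infty$-category in question is $L_n^{p,f}$-acyclic. For $P=(P_i)_i$ in $\cat{A}=\prod_i \Proj_{S_i}^{\omega}$ one has $\mathrm{End}_{\cat{A}}(P)\simeq \prod_i \map_{S_i}(P_i,P_i)$; choosing $P_i$ as a retract of $S_i^{\oplus n_i}$ exhibits $\mathrm{End}_{\cat{A}}(P)$ as a retract of $\prod_i S_i^{\oplus n_i^2}$. The point is that this last spectrum is a left $R$-module (via the maps $R\to S_i$), not a $\K(R)$-module, and $R$-modules are $L_n^{p,f}$-acyclic since $R$ is. So the acyclicity is imported from $R$ at the level of spectra, bypassing any multiplicative structure on K-theory. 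This is the step your proposal is missing.
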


\begin{proof}
For $i \in I$, the stable $\infty$-category $\Perf_{S_i}$ admits an exhaustive weight structure whose heart is equivalent to the additive $\infty$-category $\Proj_{S_i}^{\omega}$ by Example~\ref{example:weight_structure_perf}. The canonical composite
\[
	\K^{\oplus}\Big( \prod_{i \in I} \Proj_{S_i}^{\omega} \Big) \to \prod_{i \in I} \K^{\oplus}(\Proj_{S_i}^{\omega}) \to \prod_{i \in I} \K(\Perf_{S_i})
\]
is an equivalence by~\cite[Corollary 2.11.1]{Cor23} and Theorem~\ref{theorem:weighted_heart_theorem}, so we have reduced to proving that
\[
	L_{T(k)} \K^{\oplus}\Big( \prod_{i \in I} \Proj_{S_i}^{\omega} \Big) \simeq 0
\]
for $1 \leq k \leq n$. By~\cite[Proposition 3.6]{LMT20}, it suffices to prove that the endomorphism $\E_1$-rings of
\[
	\cat{A} = \prod_{i \in I} \Proj^{\omega}_{S_i}
\]
vanish after $L_n^{p,f}$-localization. If $P \in \cat{A}$, then the endomorphism $\E_1$-ring of $P$ is given by
\[
	\mathrm{End}_{\cat{A}}(P) \simeq \prod_{i \in I} \map_{S_i}(P_i, P_i), 
\]
where $\map_{S_i}(P_i, P_i)$ denotes the mapping spectrum in $\LMod_{S_i}$. For each $i \in I$, we may choose a positive integer $n_i \geq 1$ such that $P_i$ is a retract of $S_i^{\oplus n_i}$ by virtue of our assumption that $P_i$ is a finitely generated projective $S_i$-module. Consequently, we obtain a retract diagram of spectra
\[
	\mathrm{End}_{\cat{A}}(P) \to \prod_{i \in I} S_i^{\oplus n_i^2} \to \mathrm{End}_{\cat{A}}(P)
\]
which proves the desired statement since the middle term is a left $R$-module, hence vanishes after $L_n^{p,f}$-localization by virtue of our assumption that $R$ is $L_n^{p,f}$-acyclic.
\end{proof}

\begin{remark}
In general, $E$-acyclic spectra are not closed under infinite products; for each $n \geq 0$, the $n$th Postnikov truncation $\tau_{\leq n}\mathbb{S}$ is $K(1)$-acyclic, whereas $\prod_{n \geq 0} \tau_{\leq n}\mathbb{S}$ is not, else $L_{K(1)}\mathbb{S} \simeq 0$. 
The assumptions of Proposition~\ref{proposition:product_Ktheory_vanishes} should be viewed as a uniformity condition on the spectra $\K(S_{i})$, forcing their product to become acyclic.
\end{remark}

\begin{proof}[Proof of Theorem~\ref{theorem:vanishing_TR}]
Since $R$ is a connective $\E_1$-ring, there is an equivalence of spectra $\TR(R) \simeq \mathrm{C}(R)$ by~\cite[Corollary 4.2.5]{McC21}. Thus, the spectrum $\Sigma\TR(R)$ is the fiber of a suitable map
\[
	\prod_{i \geq 1} \widetilde{\K}(R[t]/t^i) \to \prod_{i \geq 1} \widetilde{\K}(R[t]/t^i)
\]
which proves the desired statement as these products vanish after $T(k)$-localization for $1 \leq k \leq n$ by virtue of Theorem~\ref{proposition:product_Ktheory_vanishes}.
\end{proof}

\begin{remark}
As remarked above, we have used work by C{\'o}rdova Fedeli~\cite{Cor23} in a crucial way. 
This result on K-theory of additive $\infty$-categories is part of a long tradition of examining the interaction of algebraic K-theory and infinite products of categories. 
One of the first results of this kind is due to Carlsson, who showed that K-theory preserves infinite products of exact 1-categories with a cylinder functor \cite{Carl95}. In~\cite{KW20}, Kasprowski--Winges proved that K-theory preserves infinite products of additive categories. Furthermore, Kasprowski--Winges~\cite{KW19} used a characterization of Grayson~\cite{Gra12} to prove that non-connective algebraic K-theory preserves infinite products of stable $\infty$-categories and this was used in~\cite{BKW21} with Bunke to prove the analogous statement of prestable $\infty$-categories.
\end{remark} 

\begin{remark} \label{remark:history}
Another attempt to prove Proposition~\ref{proposition:product_Ktheory_vanishes} proceeds by invoking the recent result of Kasprowski--Winges~\cite{KW19}, which asserts that the canonical map of spectra
\[
	\K\big( \prod_{i \in I} \Perf(S_i) \big) \to \prod_{i \in I} \K(S_i)
\]
is an equivalence (cf. Remark~\ref{remark:history}). Proceeding as in the proof of Proposition~\ref{proposition:product_Ktheory_vanishes}, it suffices to prove that the endomorphism $\E_1$-rings of the product of the stable $\infty$-categories $\Perf(S_i)$ vanish after $L_n^{p,f}$-localization. This is closely related to the following assertion:

\begin{enumerate}[leftmargin=2em, topsep=5pt, itemsep=5pt]
	\item[$(\ast)$] Let $E$ denote the endomorphism $\E_1$-ring of a finite spectrum $V$ of type $n$. If $v : \Sigma^k E \to E$ is the associated $v_n$ self-map of $E$, then there is a canonical lift of $v$ to a map of $E$-$E$-bimodules. 
\end{enumerate}

By the description of the $\E_1$-center as Hochschild cohomology, the statement $(\ast)$ is equivalent to asking for a lift of the class $v \in \pi_\ast(E)$ to a class $\tilde{v} \in \pi_{\ast}\cat{Z}_{\E_1}(E)$ along the $\E_1$-map $\cat{Z}_{\E_1}(E) \rightarrow E$. 
Classes which do lift in this way can be viewed as ``homotopically central" elements of $E$, and we remark that such lifts exist for all $\E_2$-rings, by the universal property of the $\E_1$-center. 

However, the assertion $(\ast)$ is false as we learned from Maxime Ramzi, and we thank him for help with the following argument. If such a lift exists, then we obtain an equivalence of $L_{K(n)}$-$L_{K(n)}$-bimodules
\[
	\varphi : \Sigma^k L_{K(n)}E \to L_{K(n)}E,
\]
and there is an equivalence of $\E_1$-rings $\mathrm{End}_{K(n)}(L_{K(n)} V) \simeq L_{K(n)}E$ since $V$ is a finite spectrum. The $\infty$-category of $K(n)$-local spectra is equivalent to the $\infty$-category $\Mod_{L_{K(n)}E}(\Sp_{K(n)})$ since $L_{K(n)}V$ is a compact generator of $\Sp_{K(n)}$. As a consequence, for every $K(n)$-local spectrum $X$, we obtain an equivalence $\Sigma^k X \to X$ by base-changing along $\varphi$. This is a contradiction since the homotopy groups of a $K(n)$-local spectrum are not periodic. We indicate an example of this at every height $n \geq 1$. Let $k$ be a perfect field of characteristic $p$, let $\mathbb{G}$ be a $1$-dimensional formal group of height $n$, and let $E_n$ denote the associated Lubin--Tate theory which canonically carries the structure of an $\E_\infty$-ring. For every topological generator $g$ of $\Z_p^{\times}$, there is a map of $\E_\infty$-rings $\psi_g : E_n \to E_n$, and we let $F_n$ denote the fiber of the map
\[
	E_n \xrightarrow{1 - \psi_g} E_n.
\]
A calculation reveals that the homotopy groups of $F_n$ are not periodic. For instance, if $n = 1$, then $F_1 \simeq L_{K(1)}\mathbb{S}$ since the map $\psi_g$ is induced by Adams operations on $E_1 \simeq \mathrm{KU}_p^{\wedge}$. 
\end{remark}

Finally, we explore some immediate consequences of Theorem~\ref{theorem:vanishing_TR}. 

\begin{corollary} \label{corollary:p_power_torsion}
Let $R$ be a connective $\E_1$-algebra over $\Z/p^j$. If $n \geq 1$, then $L_{T(n)}\TR(R) \simeq 0$.  
\end{corollary}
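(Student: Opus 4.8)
The plan is to deduce this directly from Theorem~\ref{theorem:vanishing_TR}, the point being that a connective $\E_1$-algebra over $\Z/p^j$ satisfies the hypothesis of that theorem for \emph{every} $n \geq 1$. First I would observe that the unit of $R$ exhibits its underlying spectrum as a module over the Eilenberg--MacLane spectrum $H\Z/p^j$, and thereby reduce everything to the assertion that $H\Z/p^j$ is $L_n^{p,f}$-acyclic for all $n \geq 1$.

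To prove that assertion, recall that $L_n^{p,f}$ is the finite localization at $T(0) \vee T(1) \vee \dots \vee T(n)$, so it suffices to check that $T(k) \wedge H\Z/p^j \simeq 0$ for every $0 \leq k \leq n$. For $k = 0$ this holds because $H\Z/p^j$ is a torsion spectrum, hence rationally trivial. For $k \geq 1$, pick a finite $p$-local complex $V$ of type $k$ together with a $v_k$-self map $v \colon \Sigma^d V \to V$ with $d > 0$; then $T(k) \wedge H\Z/p^j$ is the mapping telescope of the induced positive-degree self map of $H\Z/p^j \wedge V$, and the homotopy groups of $H\Z/p^j \wedge V$ are $H_\ast(V;\Z/p^j)$, which vanish outside finitely many degrees. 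A positive-degree self map of a spectrum with only finitely many nonzero homotopy groups has trivial telescope, so $T(k) \wedge H\Z/p^j \simeq 0$. (Equivalently one could invoke the classical dissonance of $H\F_p$ together with the finite filtration of $\Z/p^j$ whose graded pieces are copies of $\F_p$.) Since finite localizations are smashing, the class of $L_n^{p,f}$-acyclic spectra is closed under smashing with arbitrary spectra and under retracts; as $R$ is a retract of $H\Z/p^j \wedge R$ through the unit and the action map, we conclude that $L_n^{p,f}R \simeq 0$.

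Finally, for each $n \geq 1$ Theorem~\ref{theorem:vanishing_TR} applies to $R$ and yields $L_{T(k)}\TR(R) \simeq 0$ for all $1 \leq k \leq n$; letting $n$ range over all positive integers gives the claim. I do not anticipate a genuine obstacle here --- the only input that is not completely formal is the dissonance of $H\Z/p^j$, which is standard --- so the substance of the argument is simply recognizing that $R$ is a module over $H\Z/p^j$ and feeding this into Theorem~\ref{theorem:vanishing_TR}.
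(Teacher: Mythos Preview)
Your proposal is correct and follows the same approach as the paper: the paper's proof simply notes that $L_n^{p,f}R$ is a module over $L_n^{p,f}(\Z/p^j) \simeq 0$ and invokes Theorem~\ref{theorem:vanishing_TR}. You supply the extra detail verifying the $L_n^{p,f}$-acyclicity of $H\Z/p^j$, which the paper takes as known, but otherwise the argument is identical.
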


\begin{proof}
Note that $L_n^{p,f}R$ is a module over $L_n^{p,f}\Z/p^j \simeq 0$, so the assertion follows from Theorem~\ref{theorem:vanishing_TR}.  
\end{proof}

Recall that Corollary~\ref{corollary:p_power_torsion} above also follows from~\cite{BCM20,LMT20,Mat20} as discussed in the introduction. We deduce some consequence for connective Morava K-theory. Let $k(n)$ denote the connective cover of the $n$th Morava $\K$-theory $K(n)$. The spectrum $k(n)$ carries the structure of an $\E_1$-ring but not the structure of an $\E_2$-ring. We have the following:

\begin{corollary} \label{corollary:moravak}
If $n \geq 2$, then $L_{T(k)}\TR(k(n)) \simeq 0$ for every $1 \leq k \leq n-1$.  
\end{corollary}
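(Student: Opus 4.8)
The plan is to deduce this from Theorem~\ref{theorem:vanishing_TR} by exhibiting $k(n)$ as an $\E_1$-ring which is $L_{n-1}^{p,f}$-acyclic; then applying the theorem with $n$ replaced by $n-1$ gives $L_{T(k)}\TR(k(n)) \simeq 0$ for all $1 \leq k \leq n-1$, which is exactly the claim. So the entire content is the chromatic statement that $L_{n-1}^{p,f} k(n) \simeq 0$, equivalently $L_{T(k)} k(n) \simeq 0$ for every $1 \leq k \leq n-1$.

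First I would recall that $k(n)$ is the connective cover of $K(n)$, so there is a fiber sequence $\tau_{\geq 2p^n-1}k(n) \to k(n) \to K(n)$ (or rather, $k(n) \to K(n)$ with fiber concentrated in negative enough degrees after inverting $v_n$ — more precisely $K(n) = k(n)[v_n^{-1}]$, so the cofiber of $k(n) \to K(n)$ is a filtered colimit of shifts of $k(n)/(\text{truncations})$, but cleaner: $K(n)$ is obtained from $k(n)$ by inverting $v_n \in \pi_{2(p^n-1)}k(n)$). Since $T(k)$-localization is smashing in the relevant sense and commutes with the filtered colimit defining $v_n^{-1}$, it suffices to show $L_{T(k)}K(n) \simeq 0$ and $L_{T(k)}(\text{fiber}) \simeq 0$ for $1 \le k \le n-1$. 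The fiber of $k(n) \to K(n)$ is built from connective modules whose homotopy is bounded above, hence is itself bounded above, hence dissonant, hence $L_{T(k)}$-acyclic for all $k \geq 1$; and $L_{T(k)}K(n) \simeq 0$ for $k \neq n$ because $K(n)$ has a single nonzero Morava K-theory, namely $K(n)_*K(n) \neq 0$ only at height $n$ — equivalently $K(n)$ is $E(n)$-local and $K(k)_* K(n) = 0$ for $k \ne n$, so $L_{T(k)}K(n) = L_{K(k)}K(n) \simeq 0$ (using that on modules over a field spectrum telescopic and $K(k)$-localization agree, or more simply that $K(n)$ is already $T(k)$-acyclic as a spectrum). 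Assembling these: $L_{T(k)}k(n)$ sits in a fiber sequence between two $T(k)$-acyclic spectra, hence vanishes, for every $1 \leq k \leq n-1$.

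With $L_{n-1}^{p,f}k(n) \simeq 0$ established, I would invoke Theorem~\ref{theorem:vanishing_TR} with the ring $R = k(n)$ (a connective $\E_1$-ring) and with the integer there taken to be $n-1 \geq 1$, concluding $L_{T(k)}\TR(k(n)) \simeq 0$ for all $1 \leq k \leq n-1$, which is the corollary.

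The main obstacle is purely the chromatic bookkeeping: one must be careful that $T(k)$-acyclicity (finite/telescopic localization) rather than $K(k)$-acyclicity is what is needed, and that the fiber of $k(n) \to K(n)$ is genuinely $T(k)$-acyclic for \emph{all} $k \geq 1$ — this follows since any spectrum with homotopy groups bounded above is dissonant (all its Morava K-theories, and hence all $T(k)$ for $k \geq 1$, vanish on it) — combined with the fact that $K(n)$ itself is $T(k)$-acyclic for $k \neq n$. The only subtlety worth spelling out is that inverting $v_n$ is a filtered colimit, so $L_{T(k)}$ commutes with it and one really does reduce to the two cases above; everything else is formal.
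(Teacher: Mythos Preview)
Your approach is correct and matches the paper's: both show $L_{n-1}^{p,f} k(n) \simeq 0$ and then invoke Theorem~\ref{theorem:vanishing_TR} with $n$ replaced by $n-1$. One small correction: $L_{n-1}^{p,f}$-acyclicity is not literally equivalent to $T(k)$-acyclicity for $1 \leq k \leq n-1$ as you wrote --- it also requires $T(0)$-acyclicity (rational acyclicity), which is of course immediate here since $\pi_* k(n) = \F_p[v_n]$ is $p$-torsion. The paper bypasses your direct fiber-sequence argument by citing \cite[Lemma~2.2]{LMT20}, which states that the map $k(n) \to \F_p$ is an $L_{n-1}^{p,f}$-equivalence; since $\F_p$ is $L_{n-1}^{p,f}$-acyclic, so is $k(n)$.
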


\begin{proof}
For $n \geq  2$, the canonical map $k(n) \to \F_p$ is a $L_{n-1}^{p,f}$-local equivalence by~\cite[Lemma 2.2]{LMT20}, so the assertion follows from Theorem~\ref{theorem:vanishing_TR}.
\end{proof}

\begin{remark}
There is a fiber sequence of spectra
\[
	\K(\F_p) \to \K(k(n)) \to \K(K(n)),
\]
by~\cite[Proposition 4.4]{AGH19} preceded by~\cite{BL14}. We consider this as an analogue of Quillen's dévissage theorem for algebraic $\K$-theory of ring spectra. One might ask whether we can establish a similar fiber sequence for $\TR$. In particular, this would allow us to deduce an analogue of Corollary~\ref{corollary:moravak} for the non-connective Morava $K$-theory.
\end{remark}

Let $y(n)$ denote the Thom spectrum considered in \cite[Section 3]{MRS01}. This is the Thom spectrum associated to the map of $\E_{1}$-spaces
\[
\Omega J_{p^{n-1}}S^{2} \hookrightarrow \Omega^{2} S^{3} \rightarrow \mathrm{BGL}_{1}(\mathbb{S}_{p}^{\wedge})
\]
where $J_{p^{n-1}}S^{2}$ is the $2(p^{n-1})$-skeleton of $\Omega S^{3}$, which has a single cell in each even dimension. 
The map $\Omega^{2} S^{3} \rightarrow \mathrm{BGL}_{1}(\mathbb{S}^{\wedge}_{p})$ is the spherical fibration constructed by Mahowald (for $p=2$) and Hopkins (for $p$ odd) whose Thom spectrum is $H\mathbb{F}_{p}$. We have the following:

\begin{corollary} 
If $n \geq 2$, then $L_{T(k)}\TR(y(n)) \simeq 0$ for every $1 \leq k \leq n-1$. 
\end{corollary}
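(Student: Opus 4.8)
The plan is to reduce to Theorem~\ref{theorem:vanishing_TR} by showing that $y(n)$ is a connective $\E_1$-ring which is $L_{n-1}^{p,f}$-acyclic. Connectivity is clear since $y(n)$ is the Thom spectrum of a map out of a connected $\E_1$-space, and the $\E_1$-ring structure is likewise part of its construction as a Thom spectrum over an $\E_1$-space. So the only thing to verify is the chromatic assertion $L_{n-1}^{p,f} y(n) \simeq 0$.

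For this, first I would recall the standard computation of $BP_*$-homology (or $\mathbf{S}_p^{\wedge}$-Adams--Novikov behaviour) of $y(n)$: by the Thom isomorphism $H\F_p \otimes y(n) \simeq H\F_p \otimes \Omega J_{p^{n-1}}S^2$, and one finds $BP_* y(n) \cong BP_*[t_1, \dots, t_{n-1}]$ as a $BP_*$-module, with the $t_i$ in the appropriate degrees, so that $y(n)$ is a ``partial'' version of $BP\langle n-1\rangle$ in the sense of supporting exactly the classes $v_0, \dots, v_{n-1}$ and no $v_n$. Concretely, $y(n)$ has type $n$ in the sense that $v_i$ acts invertibly on $v_i^{-1} y(n)/(\text{lower})$ for $i \le n-1$ but there is no $v_n$-self map; more to the point, one shows that the mod $p$ homology, hence $y(n)$ itself, is built from spheres in a way that makes $L_{n-1}^{p,f} y(n)$ acyclic. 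The cleanest route is: $y(n)$ is a $\mathbf{S}_p^{\wedge}$-module and one checks directly that its $T(k)$-localization vanishes for $1 \le k \le n-1$ by comparing with $H\F_p$ along the skeletal filtration — the associated graded of the cell filtration of $\Omega J_{p^{n-1}}S^2$ consists of even spheres, and after smashing with a type-$k$ complex the resulting tower has vanishing $T(k)$-localized limit for $k \le n-1$, analogously to \cite[Lemma 2.2]{LMT20} for $k(n)$. Equivalently, and perhaps most efficiently, I would invoke that the Thom spectrum $y(n)$ receives a map from (or maps to) a ring whose $L_{n-1}^{p,f}$-localization is already known to vanish, in the spirit of the $k(n)$ argument, reducing the chromatic input to a single citation.

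Granting $L_{n-1}^{p,f} y(n) \simeq 0$, the corollary is immediate: $y(n)$ is a connective $\E_1$-ring with $L_{n-1}^{p,f} y(n) \simeq 0$, so Theorem~\ref{theorem:vanishing_TR} applied with ``$n$'' replaced by $n-1$ gives $L_{T(k)} \TR(y(n)) \simeq 0$ for every $1 \le k \le n-1$, exactly as claimed. As with Corollary~\ref{corollary:moravak}, the point is that $y(n)$ is only an $\E_1$-ring (not $\E_2$), so one genuinely needs the TR-specific argument of Theorem~\ref{theorem:vanishing_TR} rather than the module-over-$L_{T(k)}\K$ shortcut available in the $\E_2$ case.

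The main obstacle I anticipate is the chromatic input $L_{n-1}^{p,f} y(n) \simeq 0$ itself: one must pin down precisely which $v_i$-classes $y(n)$ supports and argue that the absence of $v_n, v_{n+1}, \dots$ together with the presence of $v_0, \dots, v_{n-1}$ forces acyclicity after $L_{n-1}^{p,f}$. This is a statement about the cell structure of $\Omega J_{p^{n-1}}S^2 \simeq$ (the relevant James filtration piece) and the Hopkins--Mahowald description of the bundle, and while it is surely folklore — $y(n)$ was introduced precisely to interpolate between $\mathbf{S}$ and $H\F_p$ — locating a clean reference or writing the short skeletal-induction argument is where the real content sits. Everything after that is a one-line appeal to Theorem~\ref{theorem:vanishing_TR}.
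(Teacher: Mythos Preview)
Your overall strategy is exactly the paper's: verify that $y(n)$ is a connective $\E_1$-ring with $L_{n-1}^{p,f} y(n) \simeq 0$, then invoke Theorem~\ref{theorem:vanishing_TR}. The paper's entire proof is the one-line citation you anticipated --- the chromatic input $L_{n-1}^{p,f} y(n) \simeq 0$ is \cite[Lemma 4.14]{LMT20}, and the rest is Theorem~\ref{theorem:vanishing_TR}.

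One caution: your heuristic discussion of the chromatic behaviour of $y(n)$ is garbled in places. The description ``supporting exactly the classes $v_0,\dots,v_{n-1}$ and no $v_n$'' and the claim that $y(n)$ ``has no $v_n$-self map'' point in the wrong direction; what one actually needs (and what \cite[Lemma 4.14]{LMT20} supplies) is that $y(n)$ is $T(k)$-acyclic for $0 \le k \le n-1$, i.e.\ behaves like a type-$n$ object from the point of view of finite localization, analogous to the fact that $k(n) \to \F_p$ is an $L_{n-1}^{p,f}$-equivalence. Your $BP_*$ formula and the ``presence of $v_0,\dots,v_{n-1}$ forces acyclicity after $L_{n-1}^{p,f}$'' sentence have the roles reversed. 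None of this affects the logical skeleton of your argument, which is correct, but if you were to actually write out the chromatic input rather than cite it you would need to straighten this out.
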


\begin{proof}
This follows immediately by combining Theorem~\ref{theorem:vanishing_TR} with \cite[Lemma 4.14]{LMT20}.  
\end{proof}

\begin{remark}
If $R$ is a connective $H\Z$-algebra, then the canonical map
\[
	L_{T(1)}\K(R) \to L_{T(1)}\K(R[1/p])
\]
is an equivalence by~\cite{BCM20,LMT20}. The analogue of this result does not hold for $\TC$ as explained in~\cite[Remark 4.27]{LMT20}, which in particular means that the result also does not prolong to $\TR$. However, at chromatic heights $n \geq 2$, $\TC$ does satisfy a version of chromatic purity (cf.~\cite[Corollary 4.5]{LMT20}). In particular, if $A \rightarrow B$ is an $L_{n}^{p,f}$-local equivalence of $\mathbb{E}_{1}$-rings, then the induced map
\[
L_{T(n)}\TC(\tau_{\geq 0}A) \xrightarrow{\simeq} L_{T(n)}\TC(\tau_{\geq 0}B). 
\]
is an equivalence.
One can wonder whether such a statement is true of $T(n)$-local $\TR$, but our methods here do not seem to shed light on this problem.
\end{remark}

\bibliographystyle{abbrv}
\bibliography{chromatic_vanishing_TR} 

\begin{thebibliography}{10}

\bibitem{AGH19}
B.~Antieau, D.~Gepner, and J.~Heller.
\newblock K-theoretic obstructions to bounded t-structures.
\newblock {\em Inventiones Mathematicae}, 216(1):241--300, 2019.

\bibitem{BL14}
C.~Barwick and T.~Lawson.
\newblock Regularity of structured ring spectra and localization in {K}-theory.
\newblock {\em arXiv:1402.6038}, 2014.

\bibitem{BS05}
S.~Betley and C.~Schlichtkrull.
\newblock The cyclotomic trace and curves on {K}-theory.
\newblock {\em Topology}, 44(4):845--874, 2005.

\bibitem{BCM20}
B.~Bhatt, D.~Clausen, and A.~Mathew.
\newblock Remarks on ${K}(1)$-local {K}-theory.
\newblock {\em Selecta Mathematica}, 26(3):1--16, 2020.

\bibitem{Blo77}
S.~Bloch.
\newblock Algebraic {K}-theory and crystalline cohomology.
\newblock {\em Publications Math{\'e}matiques de l'Institut des Hautes
  {\'E}tudes Scientifiques}, 47(1):188--268, 1977.

\bibitem{BM16}
A.~Blumberg and M.~Mandell.
\newblock The homotopy theory of cyclotomic spectra.
\newblock {\em Geometry \& Topology}, 19(6):3105--3147, 2016.

\bibitem{BHM93}
M.~B{\"o}kstedt, W.~Hsiang, and I.~Madsen.
\newblock The cyclotomic trace and algebraic {K}-theory of spaces.
\newblock {\em Inventiones mathematicae}, 111(1):465--539, 1993.

\bibitem{BKW21}
U.~Bunke, D.~Kasprowski, and C.~Winges.
\newblock Split injectivity of {A}-theoretic assembly maps.
\newblock {\em International Mathematics Research Notices}, 2021(2):885--947,
  2021.

\bibitem{BSY22}
R.~Burklund, T.~Schlank, and A.~Yuan.
\newblock The chromatic nullstellensatz.
\newblock {\em arXiv:2207.09929}, 2022.

\bibitem{Carl95}
G.~Carlsson.
\newblock On the algebraic {K}-theory of infinite product categories.
\newblock {\em K-theory}, 9(4):305--322, 1995.

\bibitem{CMNN20b}
D.~Clausen, A.~Mathew, N.~Naumann, and J.~Noel.
\newblock Descent and vanishing in chromatic algebraic {$K$}-theory via group
  actions.
\newblock {\em arXiv:2011.08233}, 2020.

\bibitem{Cor23}
A.~{C{\'o}rdova Fedeli}.
\newblock {\em Topological {H}ochschild homology of adic rings}.
\newblock PhD thesis, University of Copenhagen, 2023.

\bibitem{Fon18}
E.~Fontes.
\newblock Weight structures and the algebraic {K}-theory of stable
  $\infty$-categories.
\newblock {\em arXiv:1812.09751}, 2018.

\bibitem{Gra12}
D.~Grayson.
\newblock Algebraic {K}-theory via binary complexes.
\newblock {\em Journal of the American Mathematical Society}, 25(4):1149--1167,
  2012.

\bibitem{HW20}
J.~Hahn and D.~Wilson.
\newblock Redshift and multiplication for truncated brown--peterson spectra.
\newblock {\em Annals of Mathematics}, 196(3):1277--1351, 2022.

\bibitem{HS21}
F.~Hebestreit and W.~Steimle.
\newblock Stable moduli spaces of hermitian forms.
\newblock {\em arXiv:2103.13911}, 2021.

\bibitem{Hel20}
A.~Heleodoro.
\newblock Determinant map for the prestack of tate objects.
\newblock {\em Selecta Mathematica}, 26(5):1--57, 2020.

\bibitem{Hes96}
L.~Hesselholt.
\newblock On the $p$-typical curves in {Q}uillen's {K}-theory.
\newblock {\em Acta Mathematica}, 177(1):1--53, 1996.

\bibitem{HM97}
L.~Hesselholt and I.~Madsen.
\newblock On the {K}-theory of finite algebras over {W}itt vectors of perfect
  fields.
\newblock {\em Topology}, 36(1):29--101, 1997.

\bibitem{HM03}
L.~Hesselholt and I.~Madsen.
\newblock On the {K}-theory of local fields.
\newblock {\em Annals of Mathematics}, 158(1):1--113, 2003.

\bibitem{HM04}
L.~Hesselholt and I.~Madsen.
\newblock On the de {R}ham--{W}itt complex in mixed characteristic.
\newblock {\em Annales scientifiques de l'Ecole normale sup{\'e}rieure},
  37(1):1--43, 2004.

\bibitem{KW19}
D.~Kasprowski and C.~Winges.
\newblock Algebraic {K}-theory of stable $\infty$-categories via binary
  complexes.
\newblock {\em Journal of Topology}, 12(2):442--462, 2019.

\bibitem{KW20}
D.~Kasprowski and C.~Winges.
\newblock Shortening binary complexes and commutativity of {K}-theory with
  infinite products.
\newblock {\em Transactions of the American Mathematical Society}, 7(1):1--23,
  2020.

\bibitem{LMT20}
M.~Land, A.~Mathew, L.~Meier, and G.~Tamme.
\newblock Purity in chromatically localized algebraic ${K}$-theory.
\newblock {\em arXiv:2001.10425}, 2020.

\bibitem{LT19}
M.~Land and G.~Tamme.
\newblock On the {K}-theory of pullbacks.
\newblock {\em Annals of Mathematics}, 190(3):877--930, 2019.

\bibitem{Lur17}
J.~Lurie.
\newblock Higher algebra.
\newblock Available on the author's website, 2017.

\bibitem{MRS01}
M.~Mahowald, D.~Ravenel, and P.~Shick.
\newblock The triple loop space approach to the telescope conjecture.
\newblock {\em Contemporary Mathematics}, 271:217--284, 2001.

\bibitem{Mat20}
A.~Mathew.
\newblock On ${K}(1)$-local $\mathrm{TR}$.
\newblock {\em Compositio Mathematica}, 157(5), 2021.

\bibitem{McC21}
J.~McCandless.
\newblock On curves in $\mathrm{K}$-theory and $\mathrm{TR}$.
\newblock {\em Journal of the European Mathematical Society}, 2023.

\bibitem{NS18}
T.~Nikolaus and P.~Scholze.
\newblock On topological cyclic homology.
\newblock {\em Acta Mathematica}, 221(2):203--409, 2018.

\bibitem{Yuan21}
A.~Yuan.
\newblock Examples of chromatic redshift in algebraic {K}-theory.
\newblock {\em arXiv:2111.10837}, 2021.

\end{thebibliography}

\end{document}